\newcommand\QueensChessBoardp[4]{
    \def\N{#1}
    \def\positions{#2}
    \def\highlights{#3} 
    \begin{tikzpicture}[scale=#4, every node/.style={minimum size=1cm * #4}]
        \foreach \i in {1,...,\N}{
            \foreach \j in {1,...,\N}{
                \pgfmathparse{mod(\i+\j,2) ? "brown!80" : "cream!70"}
                \edef\color{\pgfmathresult}
                \path[fill=\color] (\i, \j) rectangle + (1, 1);
            }
        }
        
        \foreach \x/\y in \highlights{
            \path[fill=red!50] (\x, \N -\y +1) rectangle + (1, 1); 
        }
        
        \foreach \x in {1,...,\N}{
            \node at (\x+.5, .5) {\scriptsize\x};
            \node at (.5, \N - \x + 1.5) {\scriptsize\x};
        }
        
        \foreach \x/\y in \positions{
            \node at (\x+.5, \N -\y+1.5) {\includegraphics[width=0.35cm]{./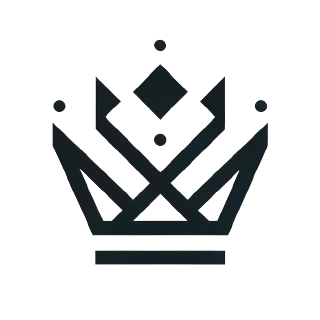}};
        }
    \end{tikzpicture}
}
\tikzset{
  a/.style={
    double=green!10,
    double distance=2pt,
    draw=teal!50,
  },
  foo /.tip={Stealth[inset=0pt, length=4pt, width=6pt, fill=green!10]},
  pics/arc arrow/.style args={#1:#2:#3}{
    code={
      \draw[a, -foo] (0, 0) arc (#1:#2:#3) coordinate (arc@temp);
      \path (arc@temp) ++(270+#2:4pt) ++(#2:3pt) coordinate (-a);
    }
  },
  pics/straight arrow/.style={
    code={
      \begin{scope}[rotate=#1]
        \draw[a, foo-foo] (0, -0.5) -- (0, 0.5);
        \fill[green!10] (-1pt, 0.5cm-4.1pt) rectangle (1pt, 0.5cm-3.9pt+\pgflinewidth);
        \fill[green!10] (1pt, -0.5cm+4.1pt) rectangle (-1pt, -0.5cm+3.9pt-\pgflinewidth);
        \coordinate (-a) at (3pt, -0.5cm + 4pt);
      \end{scope}
    }
  },
  mydash/.style={dash pattern=on 2mm off 0.5mm}
}
\newcommand{\RotationArrow}{%
    \begin{tikzpicture}[baseline={([yshift=-11ex]current bounding box.center)}]
        \draw[->, >=Stealth, line width=0.8pt] (0,0) -- (1.2,0) node[midway, above] {\scriptsize $r$};
    \end{tikzpicture}
}
\definecolor{cream}{RGB}{255,253,208}
\tiny\color{gray},
\colorlet{BlueC}{blue!80}
\colorlet{RedC}{red!80}
\colorlet{YellowC}{yellow!90!orange}
\colorlet{GreenC}{green!65!black}
\newcommand{\ULSwatch}[1]{
  {\setlength{\fboxsep}{0pt}
   \fcolorbox{#1!70!black}{#1}{\rule{0pt}{1.8ex}\hspace{1.8ex}}}
}
\newcommand{\ULHLCellInset}[7]{
  \path[
    fill=#4, fill opacity=#5,
    draw=#4!70!black, draw opacity=0.95,
    line width=#7
  ]
    ({#2 + #6},{#1 - #3 + 1 + #6}) rectangle ++({1 - 2*#6},{1 - 2*#6});
}
\newcommand{\QueensUlDiagAntiPairsTwelve}{
\begin{tikzpicture}[x=0.75cm,y=0.75cm,scale=0.95, every node/.style={inner sep=0pt}]
  \def\N{12}
  \def\LightSq{cream!85}
  \def\DarkSq{brown!80!black}

  \foreach \x in {1,...,\N}{
    \foreach \y in {1,...,\N}{
      \pgfmathtruncatemacro{\parity}{mod(\x+\y,2)}
      \ifnum\parity=0
        \path[fill=\LightSq, draw=black!12, line width=0.15pt] (\x,\y) rectangle ++(1,1);
      \else
        \path[fill=\DarkSq,  draw=black!12, line width=0.15pt] (\x,\y) rectangle ++(1,1);
      \fi
    }
  }

  \foreach \c/\r in {4/1,5/2,6/3,7/4,8/5,9/6,10/7,11/8,12/9}{\ULHLCellInset{\N}{\c}{\r}{BlueC}{0.92}{0.02}{1.05pt}}
  \foreach \c/\r in {9/1,8/2,7/3,6/4,5/5,4/6,3/7,2/8,1/9}{\ULHLCellInset{\N}{\c}{\r}{BlueC}{0.92}{0.02}{1.05pt}}

  \foreach \c/\r in {9/1,10/2,11/3,12/4}{\ULHLCellInset{\N}{\c}{\r}{RedC}{0.92}{0.08}{1.05pt}}
  \foreach \c/\r in {4/1,3/2,2/3,1/4}{\ULHLCellInset{\N}{\c}{\r}{RedC}{0.92}{0.08}{1.05pt}}

  \foreach \c/\r in {1/9,2/10,3/11,4/12}{\ULHLCellInset{\N}{\c}{\r}{YellowC}{0.92}{0.14}{1.05pt}}
  \foreach \c/\r in {12/9,11/10,10/11,9/12}{\ULHLCellInset{\N}{\c}{\r}{YellowC}{0.92}{0.14}{1.05pt}}

  \foreach \c/\r in {1/4,2/5,3/6,4/7,5/8,6/9,7/10,8/11,9/12}{\ULHLCellInset{\N}{\c}{\r}{GreenC}{0.92}{0.20}{1.05pt}}
  \foreach \c/\r in {12/4,11/5,10/6,9/7,8/8,7/9,6/10,5/11,4/12}{\ULHLCellInset{\N}{\c}{\r}{GreenC}{0.92}{0.20}{1.05pt}}

  \foreach \c/\r in {4/1,12/4,9/12,1/9}{
    \node at ({\c+0.5},{\N-\r+1.5}) {\includegraphics[width=0.55cm]{./images/Queen-Logo-GPT-removebg-preview-cropped.png}};
  }

  \foreach \i in {1,...,\N}{
    \node[font=\scriptsize] at (\i+0.5,0.35) {\i};
    \node[font=\scriptsize] at (0.35,\N-\i+1.5) {\i};
  }

  \node[
    anchor=north west,
    draw=black!25,
    fill=white,
    rounded corners=2pt,
    inner sep=5pt,
    align=left,
    font=\scriptsize
  ] at (\N+1.6,\N+1.05) {
    \textbf{Diagonal Pairing}\\[2pt]
    \begin{tabular}{@{}l l@{}}
      \ULSwatch{BlueC}   & $D^{-}_{l-1+n} \cup D^{+}_{n+1-l}$\\
      \ULSwatch{RedC}    & $D^{-}_{2n-l} \cup D^{+}_{l}$\\
      \ULSwatch{YellowC} & $D^{-}_{l} \cup D^{+}_{2n - l}$\\
      \ULSwatch{GreenC}  & $D^{-}_{n+1-l} \cup D^{+}_{n+l-1}$\\
    \end{tabular}
  };

\end{tikzpicture}
}
\tikzset{
  config/.style={
    draw,
    rectangle,
    rounded corners=2pt,
    minimum width=1.2cm,
    minimum height=0.7cm,
    fill=blue!5
  },
  repbox/.style={
    draw=red!70,
    ultra thick,
    rounded corners=3pt
  }
}
\theoremstyle{thmstyleone}
\newtheorem{theorem}{Theorem}
\newtheorem{proposition}[theorem]{Proposition}
\newtheorem{lemma}[theorem]{Lemma}
\theoremstyle{thmstyletwo}
\theoremstyle{thmstylethree}
\begin{document}

\title[$Q(n)$ is divisible by $4$]{$Q(n)$ is divisible by $4$}

\author*[1]{\fnm{Hugo Møller} \sur{Nielsen}}\email{hugomn2002@gmail.com}

\affil[1]{\orgdiv{Department of Applied Mathematics and Computer Science}, \orgname{Technical University of Denmark (DTU)}, 
  \orgaddress{\city{Kgs. Lyngby}, \postcode{2800}, \country{Denmark}}}

\abstract{
    We consider the classical $n$-queens problem, which asks how many ways 
    one can place $n$ mutually non-attacking queens on an $n \times n$ chessboard. 
    We prove that the total number of solutions to the $n$-queens problem 
    $Q(n)$ is divisible by 4 whenever $n \ge 6$.
}

\keywords{n-queens problem, arithmetic properties, divisibility}

\maketitle

\section{Introduction}\label{introduction}
An $n$-queens configuration is a placement of $n$ mutually non-attacking queens on an $n \times n$ chessboard. That is, no 
two queens are contained in the same row, column, or diagonal. The $n$-queens problem, as posed by Max Bezzel in 
1848 \cite{garcia2023nqueens}, asks: for a given board size $n$, how many distinct $n$-queens configurations exist. We 
denote this number by $Q(n)$. 

The problem has attracted attention from notable mathematicians including Gauss, P\'olya, and Lucas 
\cite{Pratt2019NQueens}. At first, the values of $Q(n)$ were computed by hand, or at least without digital assistance. 
These were the values of $Q(n)$ for $n \leq 13$, see \cite{BellStevens2009}. 
With the advent of computers, computing $Q(n)$ for larger $n$ became feasible; 
the current record being $Q(27) = 234907967154122528$, which required an immense amount of time and computational power to 
compute \cite{preusser_q27_2016}. 
Today the $n$-queens problem is often introduced at universities in the context of computation, where it 
serves as an example problem to be tackled by backtracking, constraint programming, or genetic algorithms, 
but theoretical results pertaining to $Q(n)$ remain an active area of research \cite{Simkin2023NQueens,Glock2022NQueens}.

Existence of solutions to the $n$-queens problem for $n \geq 4$ was shown using a constructive proof 
in 1874 by Pauls \cite{Pauls1874Damenproblem}. The simple result that $Q(n)$ is divisible by $2$ for all 
$n \geq 2$ has been known for at least as long \cite{oeisA000170, kraitchik1953mathematical}, 
a short proof of which will be provided in this work. More recently, 
the asymptotic behavior of $Q(n)$ was characterized and proven in 2021 to be of the form $Q(n) = ((1 + o(1))ne^{-\alpha})^n$
by Simkin \cite{Simkin2023NQueens}.

In addition to recalling a short proof that $Q(n)$ is even for all $n\ge 2$, we prove the stronger divisibility statement
in Theorem~\ref{thm:Qn_divby4_1}.
\begin{theorem} \label{thm:Qn_divby4_1}
    For $n \in \mathbb{N} \setminus \lbrace 1, 4, 5 \rbrace$ we have $Q(n) \equiv 0 \pmod{4}$.
\end{theorem}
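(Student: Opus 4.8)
The plan is to exploit the action of the dihedral group $D_4$ of board symmetries (the four rotations $e, r, r^2, r^3$ by multiples of $90^\circ$ together with the four reflections) on the set $\mathcal{S}_n$ of $n$-queens configurations, and to read off $Q(n) = |\mathcal{S}_n|$ modulo $4$ from the orbit sizes. First I would record the rigidity fact that none of the four reflections fixes any configuration: a horizontal or vertical reflection would place two queens in a common row or column, while a reflection in either diagonal sends a queen at $(x,y)$ to a square lying on the same anti-diagonal (respectively, the same diagonal), so in every case the reflected configuration shares a line with the original and cannot equal it. This is exactly the mechanism behind the short proof that $2 \mid Q(n)$, now applied to all four reflections simultaneously.

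Consequently the stabilizer of any configuration contains no reflection, so it is a subgroup of the cyclic rotation group $\langle r\rangle$, and every $D_4$-orbit has size $8$, $4$, or $2$. The size-$2$ orbits are precisely the $90^\circ$-symmetric configurations (those fixed by $r$, with stabilizer $\{e,r,r^2,r^3\}$) and the size-$4$ orbits are those fixed by $r^2$ but not by $r$. Writing $Q(n) = 8a + 4b + 2c$, where $a, b, c$ count the orbits of each size, I obtain
\[
  Q(n) \equiv 2c \pmod 4,
\]
so the theorem reduces to showing that the number $c$ of $90^\circ$-symmetric orbits is even; equivalently, that the number $N_{90}$ of $90^\circ$-symmetric configurations is divisible by $4$.

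Next I would dispose of two residue classes at once. Under $r$ the squares fall into orbits of size $4$, together with the single central square when $n$ is odd, so an $r$-invariant placement of $n$ queens forces either $n \equiv 0 \pmod 4$ (no central queen) or $n \equiv 1 \pmod 4$ (a central queen). Hence for $n \equiv 2, 3 \pmod 4$ there are no $90^\circ$-symmetric configurations, $c = 0$, and $4 \mid Q(n)$ immediately. For $n \equiv 0, 1 \pmod 4$ the remaining task is to pair up the $90^\circ$-symmetric orbits. Here I would use a diagonal pairing: such a configuration is governed by its four outermost queens (one in each border line), whose positions are fixed by the single parameter $l$, the column of the top queen, and which occupy the eight boundary diagonals grouped into the four rotation-compatible pairs $D^{-}_{l-1+n}\cup D^{+}_{n+1-l}$, $D^{-}_{2n-l}\cup D^{+}_{l}$, $D^{-}_{l}\cup D^{+}_{2n-l}$, and $D^{-}_{n+1-l}\cup D^{+}_{n+l-1}$. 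The aim is to build from this data an involution $\Phi$ on the $90^\circ$-symmetric configurations that is $D_4$-equivariant and maps every orbit to a \emph{different} orbit, so that the $c$ orbits come in pairs and $c$ is even.

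The main obstacle is precisely the construction and verification of $\Phi$. I expect $\Phi$ to redistribute the queens between the two rotation-orbits of boundary diagonals in a way that is not induced by any board symmetry, and the real work is to check that it preserves the non-attacking condition, preserves $90^\circ$-symmetry, squares to the identity, and — most delicately — fixes no orbit once $n$ is large enough. The congruences $Q(4) \equiv Q(5) \equiv 2 \pmod 4$ show that this last property must fail exactly at $n = 4, 5$, where a lone $90^\circ$-symmetric orbit has no partner; so the quantitative heart of the argument is to prove that $\Phi$ pairs distinct orbits for all $n \ge 6$ with $n \equiv 0, 1 \pmod 4$, isolating $n = 4, 5$ (beyond the trivial $n = 1$) as the only exceptions.
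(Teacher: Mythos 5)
Your setup is correct and coincides exactly with the paper's: the rigidity of reflections (the paper's Lemma~\ref{lem:no-s-symmetry}), the resulting orbit sizes $8$, $4$, $2$, the congruence $Q(n) \equiv 2c \pmod 4$ where $c$ counts the size-$2$ (i.e.\ $r$-symmetric) orbits, and the observation that $F_r = \emptyset$ when $n \equiv 2,3 \pmod 4$ (the paper's Lemma~\ref{lem:4k_4kp1_imply_Qn_div4}). One small caveat in your rigidity step: a queen lying on the axis of reflection is fixed rather than ``moved to a square sharing a line,'' so you must first note that not every queen can lie on the axis --- e.g.\ for the vertical reflection at most one queen occupies the center column, so an off-axis queen exists and produces the row collision; similarly at most one queen can lie on a main diagonal. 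This is easily repaired.

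The genuine gap is in the case $n \equiv 0,1 \pmod 4$: you never construct the involution $\Phi$, and you say explicitly that its ``construction and verification'' is the main obstacle --- but that is the entire content of the theorem beyond bookkeeping; as stated, your argument proves nothing for these residue classes. For the record, your plan is realizable, and the paper's proof is precisely such a construction. For $Q \in F_r$ let $\ell$ be the column of the queen in row $1$, so $Q \supseteq U_\ell$ where $U_\ell$ is the $r$-orbit of the four border queens (plus the center queen when $n$ is odd); write $Q = U_\ell \cup D$ and set $\Phi(Q) = U_\ell \cup sD$, i.e.\ reflect only the non-border queens. This is well defined because the set of squares attacked by $U_\ell$ is $s$-invariant even though $U_\ell$ itself is not: its rows are preserved by $s$, its columns $\{1,\ell,n+1-\ell,n\}$ are permuted among themselves, and its diagonals are exchanged with its anti-diagonals in exactly the four pairs you listed; moreover $rD = D$ gives $r(sD) = s(r^{-1}D) = sD$, so $\Phi(Q) \in F_r$. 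Then $\Phi$ is an involution commuting with $s$; a fixed point $\Phi(Q)=Q$ would force $sD = D$, hence every queen of $D$ in the center column, which is impossible for $n \geq 6$ (no center column for even $n$; for odd $n$ the center square already lies in $U_\ell$) --- and this is exactly where $n = 4,5$ escape, since there $D = \emptyset$ is $s$-fixed; and $\Phi(Q) = sQ = U_{n+1-\ell} \cup sD$ would force $\ell = n+1-\ell$, impossible since $(n+1)/2$ is not an integer for $n \equiv 0 \pmod 4$ and $U_{(n+1)/2}$ is self-attacking for $n \equiv 1 \pmod 4$. Hence $\Phi$ pairs each orbit $\{Q,sQ\}$ with a distinct orbit, $c$ is even, and the theorem follows. (The paper packages the same two factors of $2$ slightly differently, as $|F_r| = 2\sum_{\ell \le \lfloor n/2 \rfloor}|D^{(\ell)}|$ with each $|D^{(\ell)}|$ even, via Lemma~\ref{lem:Dl_even}.) Without this construction, your proposal is an outline of the right shape rather than a proof.
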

We prove Theorem~\ref{thm:Qn_divby4_1} in Section~\ref{symmetry_of_Qn}. In the next section, Section~\ref{background}, we 
provide the tools and context necessary for the proof.

\section{Preliminaries}\label{background}
We briefly outline the notation used in this work and some basic properties of the $n$-queens problem.

\subsection{Representing $n$-queens configurations}
Throughout, $n$ denotes the board size. For $N \in \mathbb{N}$, write $[N] = \lbrace 1,2, \dots, N \rbrace$.
We represent the chessboard as the two-dimensional grid $[n] \times [n]$, using notation similar to that in \cite{Glock2022NQueens},
where rows, columns, diagonals and anti-diagonals are represented, respectively, by
\begin{align}
  R_{i} &= \lbrace (i,j) | j \in [n] \rbrace, \\
  C_{j} &= \lbrace (i,j) | i \in [n] \rbrace, \\
  D_{k}^{-} &= \lbrace (i,j) \in [n] \times [n] | j - i + n = k \rbrace, \\
  D_{k}^{+} &= \lbrace (i,j) \in [n] \times [n] | i + j - 1 = k \rbrace.
\end{align}
We identify the square in row $i \in [n]$ and column $j \in [n]$ with the coordinate $(i,j) \in [n] \times [n]$.
We define an \emph{$n$-queens configuration} to be a set $Q \subseteq [n] \times [n]$ of cardinality $|Q| = n$ such that 
\begin{align}
    &|Q \cap R_i| \leq 1 \;\;\; \forall i \in [n], &&|Q \cap C_j| \leq 1 \;\;\; \forall j \in [n], \label{eq:row_col_constraint} \\
    &|Q \cap D_{k}^{-}| \leq 1 \;\;\; \forall k \in [2n-1], &&|Q \cap D_{k}^{+}| \leq 1 \;\;\; \forall k \in [2n-1]. \label{eq:diag_constraint}
\end{align}
Given a set $S \subseteq [n] \times [n]$, we say that $U \subseteq [n] \times [n]$ is a \emph{completion of $S$} if 
$S \cup U$ is an $n$-queens configuration and $S \cap U = \emptyset$.

\subsection{Symmetries} \label{symmetries}
As the $n$-queens problem is based on placing queens on a square board, the dihedral group, $D_4$, naturally plays an important role in mathematical observations 
and efficient solution implementations of the $n$-queens problem. $D_4$ is the symmetry group of the square, namely the composition
of zero or more $90$ degree counter-clockwise rotations, $r$, with zero or more vertical mirroring operations, $s$, and with $e$ denoting the identity operation, 
see Figure~\ref{fig:geometric-relationships}. 
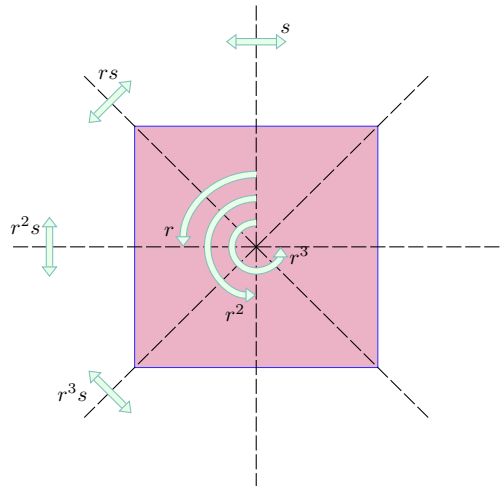
\begin{figure}[h!]
    \centering
    \begin{tikzpicture}[scale=0.8, transform shape]
        \path[draw=blue!80, fill=purple!30] (-2, -2) rectangle (2, 2);
        \foreach \i in {0, 45, ..., 315} {
          \draw[mydash] (0, 0) -- (\i:4);
        }
        \foreach \r/\t/\p [count=\i] in {90/s/above right, 135/rs/above left, 180/r^2s/left, 225/r^3s/below left} {
          \pic (arrow\i) at (\r:3.4cm) {straight arrow=\r};
          \node[\p, inner sep=1pt] at (arrow\i-a) {$\t$};
        }
        \foreach \x/\r/\t/\p [count=\i] in {0.4/360/r^3/right, 0.8/270/r^2/below left, 1.2/180/r/above left} {
          \pic (arc\i) at (0, \x) {arc arrow=90:\r:\x cm};
          \node[\p, inner sep=1pt] at (arc\i-a) {$\t$};
        }
    \end{tikzpicture}
    \caption{An illustrative diagram of the $D_4$ symmetry group.\protect\footnotemark}
    \label{fig:geometric-relationships}
\end{figure}
\footnotetext{Diagram inspired by the post. Accessed: 2024-02-17 (2020). \url{https://tex.stackexchange.com/questions/552589/square-rotational-and-reflection-symmetries}}

We define the group actions $e,r,s$ as functions on the power set $\mathcal{P}([n] \times [n])$ by 
\begin{align}
    eQ &= Q, \\
    sQ &= \lbrace (i, n + 1 - j) | (i,j) \in Q \rbrace, \\
    rQ &= \lbrace (n+1-j, i) | (i,j) \in Q \rbrace.
\end{align}
Note here that each symmetry $x \in D_4$ can be seen as a change in perspective of the observer of the chessboard. Changing 
the perspective of the observer clearly does not make two mutually non-attacking queens on a chessboard mutually attacking, 
hence making $xQ$ a valid $n$-queens configuration for each $n$-queens configuration $Q \subseteq [n] \times [n]$.
The $n$-queens configuration $Q$ is said to be \emph{fixed by the symmetry} $x \in D_4$ if $xQ = Q$, see Figure~\ref{fig:r-symmetry-5-queens}.
\begin{figure}[h!]
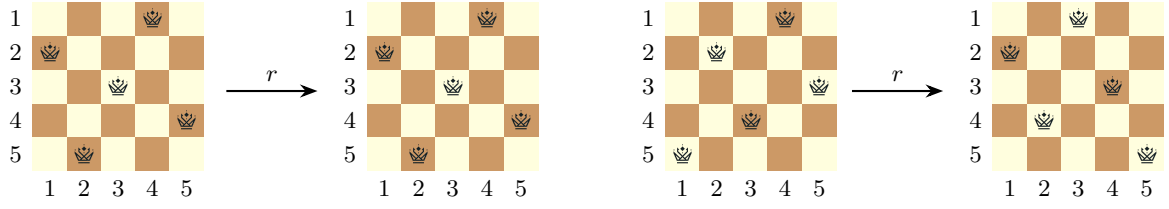

    \centering

    \QueensChessBoardp{5}{2/5,5/4,3/3,1/2,4/1}{}{0.45}
    \hspace{0.1em}\RotationArrow\hspace{-1em}
    \QueensChessBoardp{5}{2/5,5/4,3/3,1/2,4/1}{}{0.45}%
    \qquad
    \QueensChessBoardp{5}{1/5,3/4,5/3,2/2,4/1}{}{0.45}%
    \hspace{0.1em}\RotationArrow\hspace{-1em}
    \QueensChessBoardp{5}{5/5, 4/3, 3/1, 2/4, 1/2}{}{0.45}%

    \caption{Examples of 5-queens configurations with (left) and without (right) rotational symmetry under $r$, 
    the $90^\circ$ rotation of the $5\times 5$ board.}
    \label{fig:r-symmetry-5-queens}
\end{figure}

\begin{lemma}\label{lem:no-s-symmetry}
    There is no $n$-queens solution $Q$ that is fixed by one of the reflections $s, rs, r^2s$ or $r^3s$ for $n \geq 2$. 
\end{lemma}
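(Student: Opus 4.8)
The plan is to treat all four reflections uniformly through a single observation: each reflection $x \in \{s, rs, r^2s, r^3s\}$ fixes an axis (a row, a column, or a diagonal), and for any square $P$ the square $P$ and its image $xP$ always share a common line of attack. Hence if $xQ = Q$, pairing each queen with its reflected partner shows that a queen lying off the axis would share an attack line with a \emph{distinct} queen, which is forbidden; therefore every queen must lie on the axis itself. The contradiction then follows because the axis is itself a single row, column, or diagonal, and so cannot host all $n \ge 2$ queens. To set this up I would first record the explicit formulas by composing the generators: one computes $rsQ = \{(j,i) : (i,j) \in Q\}$ (transpose), $r^2sQ = \{(n+1-i, j) : (i,j) \in Q\}$ (horizontal flip), and $r^3sQ = \{(n+1-j, n+1-i) : (i,j) \in Q\}$ (anti-diagonal flip), with $s$ itself the vertical flip $(i,j) \mapsto (i, n+1-j)$.

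For the axis reflections $s$ and $r^2s$ the argument is immediate. If $sQ = Q$ and $(i,j) \in Q$, then $(i, n+1-j) \in Q$ lies in the same row $R_i$, so the row constraint~\eqref{eq:row_col_constraint} forces $(i,j) = (i, n+1-j)$, i.e. $j = n+1-j$. Thus every queen would occupy the central column $j = (n+1)/2$, which is impossible for $n \ge 2$ (and has no integer solution at all when $n$ is even, since then no square lies on the axis). The reflection $r^2s$ is handled identically with the roles of rows and columns interchanged, using the column constraint.

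For the diagonal reflections $rs$ and $r^3s$ the same scheme applies, but the shared attack line is now a diagonal rather than a row or column. If $rsQ = Q$ and $(i,j) \in Q$, then $(j,i) \in Q$; since $(i,j)$ and $(j,i)$ both lie on the anti-diagonal $D^+_{i+j-1}$, the diagonal constraint~\eqref{eq:diag_constraint} forces $i = j$, so every queen lies on the main diagonal. But all such squares lie on the single diagonal $D^-_n$, which can contain at most one queen --- a contradiction for $n \ge 2$. The case $r^3s$ is symmetric: invariance forces every queen onto the anti-diagonal $i+j = n+1$, all of whose squares lie on $D^+_n$. The only real care needed is in deriving the reflection formulas correctly from the generators and, for each one, identifying which of the four attack lines a square shares with its image; once these are pinned down, every case collapses to the single observation that a reflection axis is itself an attack line and therefore cannot accommodate two or more mutually non-attacking queens.
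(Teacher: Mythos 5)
Your proof is correct, but it takes a genuinely different route from the paper's. Both arguments begin with the same computation---the explicit formulas for $s$, $rs$, $r^2s$, $r^3s$ (which match the paper's conventions: $rs$ is the transpose, $r^2s$ the horizontal flip, $r^3s$ the anti-diagonal flip) and the observation that a square and its image always share a row, column, diagonal, or anti-diagonal---but they diverge in how the contradiction is extracted. The paper disposes of $n=2,3$ via $Q(2)=Q(3)=0$, checks $n=4$ by hand, and for $n \geq 5$ uses a counting argument: the middle row, middle column, and two main diagonals each carry at most one queen, so some queen $(i,j)$ avoids all four, and its image under the fixing reflection is then a \emph{distinct} queen sharing an attack line with it, violating \eqref{eq:row_col_constraint} or \eqref{eq:diag_constraint}. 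You instead push the pairing argument through for \emph{every} queen: invariance forces each queen to coincide with its own image, i.e.\ to lie on the reflection axis, and you then add the key observation that each axis (central column, central row, the main diagonal inside $D_n^-$, or the main anti-diagonal inside $D_n^+$) is itself contained in a single attack line, hence can host at most one queen---contradicting $|Q| = n \geq 2$. That extra observation buys uniformity: no small-case checks, no existence argument for an off-axis queen, and the hypothesis $n \geq 2$ is used exactly once, at the end. The paper's version, by contrast, genuinely needs $n \geq 5$ for the off-axis queen to exist and must treat $n \in \{2,3,4\}$ separately; your packaging is tighter and covers all $n \geq 2$ in one stroke.
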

\begin{proof}
    We have $Q(2) = Q(3) = 0$, and can manually check that Lemma~\ref{lem:no-s-symmetry} holds true for $n = 4$, 
    so consider now the case $n \geq 5$. Fix any $n$-queens configuration $Q$ and let $(i,j) \in Q$ be a queen with 
    $(i,j) \notin R_{\left\lfloor \frac{n+1}{2} \right\rfloor} \cup C_{\left\lfloor \frac{n+1}{2} \right\rfloor} \cup D_n^+ \cup D_n^-$, existence 
    of which is guaranteed by $n \geq 5$ and the constraints given in (\ref{eq:row_col_constraint}) and (\ref{eq:diag_constraint}), by virtue 
    of $Q$ being an $n$-queens configuration. Then we can break the proof into the following four cases:
    \begin{align}
        sQ &= Q \implies (i, n+1-j) \in sQ = Q \implies |Q \cap R_i| > 1, \\
        r^2sQ &= Q \implies (n+1-i, j) \in r^2sQ = Q \implies |Q \cap C_j| > 1, \\
        rsQ &= Q \implies (j, i) \in rsQ = Q \implies |Q \cap D_{j+i-1}^{+}| > 1, \\
        r^3sQ &= Q \implies (n+1-j, n+1-i) \in r^3sQ = Q \implies |Q \cap D_{n+j-i}^{-}| > 1,
    \end{align}
    each of which is a contradiction according to (\ref{eq:row_col_constraint}) or (\ref{eq:diag_constraint}).
\end{proof}
From Lemma~\ref{lem:no-s-symmetry} it easily follow that $Q(n)$ is even.
\begin{proposition} \label{cor:Qn_even}
    $Q(n)$ is even for all $n \geq 2$.
\end{proposition}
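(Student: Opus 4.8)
The plan is to exhibit a fixed-point-free involution on the set of $n$-queens configurations and conclude by a parity/orbit-counting argument. Let $\mathcal{S}_n$ denote the set of all $n$-queens configurations, so that $Q(n) = |\mathcal{S}_n|$. The natural candidate for the involution is the reflection $s$, since it is the simplest of the symmetries ruled out by Lemma~\ref{lem:no-s-symmetry}.

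First I would verify that $s$ restricts to a well-defined map $\mathcal{S}_n \to \mathcal{S}_n$: by the remark preceding Lemma~\ref{lem:no-s-symmetry}, each symmetry $x \in D_4$ sends an $n$-queens configuration $Q$ to another valid $n$-queens configuration $xQ$, so in particular $sQ \in \mathcal{S}_n$ whenever $Q \in \mathcal{S}_n$. Next I would check that $s$ is an involution, i.e. $s^2 = e$: applying the definition twice, a queen at $(i,j)$ maps to $(i, n+1-j)$ and then back to $(i, n+1-(n+1-j)) = (i,j)$, so $s(sQ) = Q$ for every $Q$. In particular $s$ is a bijection of $\mathcal{S}_n$ with $s^{-1} = s$.

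Now consider the action of the order-two subgroup $\langle s \rangle = \{e, s\}$ on $\mathcal{S}_n$. Each orbit has size either $1$ or $2$, and an orbit $\{Q\}$ of size $1$ occurs exactly when $sQ = Q$. By Lemma~\ref{lem:no-s-symmetry} (applied with the reflection $s$), no configuration in $\mathcal{S}_n$ is fixed by $s$ when $n \ge 2$, so there are no singleton orbits. Hence $\mathcal{S}_n$ is partitioned into orbits $\{Q, sQ\}$ each of cardinality $2$, which gives that $Q(n) = |\mathcal{S}_n|$ is twice the number of such orbits, and therefore even.

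There is no real obstacle here, as the statement follows almost immediately from Lemma~\ref{lem:no-s-symmetry}; the only points requiring care are the two elementary verifications that $s$ is a well-defined self-map of $\mathcal{S}_n$ and that $s^2 = e$, both of which are direct from the definition of $s$. I would note that the argument goes through verbatim with $s$ replaced by any one of $rs, r^2s, r^3s$, since Lemma~\ref{lem:no-s-symmetry} forbids fixed points for all four reflections.
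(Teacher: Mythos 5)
Your proposal is correct and follows essentially the same route as the paper: both use Lemma~\ref{lem:no-s-symmetry} to conclude that the reflection $s$ acts on the set of $n$-queens configurations as a fixed-point-free involution, partitioning it into orbits $\{Q, sQ\}$ of size two, whence $Q(n)$ is even. Your explicit verification that $s^2 = e$ and your closing remark that any of the four reflections would work are minor elaborations of the same argument.
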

\begin{proof}
    Fix $n \geq 2$ and let $S_n$ denote the set of all $n$-queens configurations on the $n \times n$ board, so that
    $\lvert S_n \rvert = Q(n)$.
    By Lemma~\ref{lem:no-s-symmetry}, no configuration $Q \in S_n$ is fixed by the reflection $s$, that is, $sQ \neq Q$ for all
    $Q \in S_n$. Moreover, for two distinct configurations $Q_1, Q_2 \in S_n$ we have $sQ_1 \neq sQ_2$ 
    by injectivity of the map $Q \mapsto sQ$.
    Thus $S_n$ decomposes into disjoint orbits of the form $\{Q, sQ\}$, each containing exactly two distinct
    configurations. It follows that $\lvert S_n \rvert$ is even, and hence $Q(n)$ is even.
\end{proof}

As per Lemma~\ref{lem:no-s-symmetry}, an $n$-queens configuration can only possibly be fixed by one of the symmetry 
actions $e, r, r^2, r^3$. The identity $e$ trivially fixes all $n$-queens configurations, leaving us with the actions 
$r, r^2$ and $r^3$. If an $n$-queens configuration $Q$ is fixed under $r$, then it is fixed under $r^3$ since 
$r^3Q = r^2(rQ) = r^2Q = r(rQ) = rQ = Q$, and if $Q$ is fixed under $r^3$, then it is fixed under $r$ since 
$rQ = r(r^3Q) = r^4Q = eQ = Q$, hence the set of $n$-queens configurations that are fixed under $r$ is the same 
set of $n$-queens configurations that are fixed under $r^3$. 

Thus, we conclude that we can partition 
the set of all $Q(n)$ $n$-queens configurations into those solutions that are fixed under the $r$-symmetry, $F_r$, 
those solutions that are fixed under the $r^2$-symmetry, but not the $r$-symmetry, $F_{r^2 \setminus r}$, and 
those solutions that are only fixed by the identity symmetry operator $e$, $F_e$. By this partition, we must have that 
\begin{align} \label{eq:Qn_decomposition_Fx}
    Q(n) = |F_r| + |F_{r^2 \setminus r}| + |F_e|.
\end{align}
See Figure~\ref{fig:r-symmetry-5-queens} 
for an example in $F_r$ and $F_e$ respectively, for the case $n = 5$. Now define $C_x$ as a set of equivalence class representatives under $D_4$ 
of the solutions in $F_x$ for $x \in \lbrace r, r^2 \setminus r, e \rbrace$. 

The final result of this section, Theorem~\ref{thm:Qn_decomposition}, shows that the cardinality of the set of solutions fixed under $F_r$, $F_{r^2 \setminus r}$ and 
$F_e$ respectively are divisible by $2, 4$ and $8$. This result is crucial in the proof of Theorem~\ref{thm:Qn_divby4_1}, but is also of independent 
interest.

\begin{theorem}\label{thm:Qn_decomposition}
For $n \geq 2$ we have $|F_r| = 2|C_r|$, $|F_{r^2 \setminus r}| = 4|C_{r^2 \setminus r}|$, and $|F_e| = 8|C_e|$, and hence
\begin{align}
    Q(n) = 2|C_r| + 4|C_{r^2 \setminus r}| + 8|C_e|.
\end{align}
\end{theorem}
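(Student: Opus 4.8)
The plan is to apply the orbit–stabilizer theorem to the action of $D_4$ on the set $S_n$ of all $n$-queens configurations. For a configuration $Q \in S_n$ I would write $\mathrm{Stab}(Q) = \lbrace x \in D_4 : xQ = Q \rbrace$ for its stabilizer and $\mathcal{O}(Q) = \lbrace xQ : x \in D_4 \rbrace$ for its $D_4$-orbit, so that orbit–stabilizer gives $|\mathcal{O}(Q)| = |D_4| / |\mathrm{Stab}(Q)| = 8 / |\mathrm{Stab}(Q)|$. Since the sets $F_r$, $F_{r^2 \setminus r}$, and $F_e$ are distinguished precisely by which symmetries fix their elements, the whole argument reduces to pinning down the possible stabilizer subgroups and then counting orbits.

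First I would determine the possible stabilizers. By Lemma~\ref{lem:no-s-symmetry} no reflection fixes any configuration, so $\mathrm{Stab}(Q) \subseteq \langle r \rangle = \lbrace e, r, r^2, r^3 \rbrace \cong C_4$ for every $Q \in S_n$. The only subgroups of the cyclic group $C_4$ are $\lbrace e \rbrace$, $\lbrace e, r^2 \rbrace$, and $\langle r \rangle$ itself. Using the observation already recorded before (\ref{eq:Qn_decomposition_Fx}) that $rQ = Q$ if and only if $r^3 Q = Q$, these three subgroups match the three classes exactly: $\mathrm{Stab}(Q) = \langle r \rangle$ iff $Q \in F_r$, $\mathrm{Stab}(Q) = \lbrace e, r^2 \rbrace$ iff $Q \in F_{r^2 \setminus r}$, and $\mathrm{Stab}(Q) = \lbrace e \rbrace$ iff $Q \in F_e$. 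Feeding these orders into the orbit–stabilizer formula yields orbit sizes $2$, $4$, and $8$ respectively.

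The main step, and the point requiring genuine care, is to show that each of $F_r$, $F_{r^2 \setminus r}$, and $F_e$ is a union of \emph{complete} $D_4$-orbits, so that the number of orbits it contains is exactly $|C_x|$ and its cardinality equals (orbit size) $\times |C_x|$. Here I would invoke the conjugation rule $\mathrm{Stab}(xQ) = x\,\mathrm{Stab}(Q)\,x^{-1}$ together with the fact that each candidate stabilizer is normal in $D_4$: the trivial subgroup and $\langle r \rangle$ (of index $2$) are clearly normal, and $\lbrace e, r^2 \rbrace$ is the centre of $D_4$ because $r^2$ commutes with every element. Normality forces $\mathrm{Stab}(xQ) = \mathrm{Stab}(Q)$ for all $x \in D_4$, so the symmetry type is constant along each orbit, and hence the three sets really do decompose into whole orbits. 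I expect this union-of-orbits argument to be the conceptual crux; everything else is bookkeeping.

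Finally I would assemble the pieces. Each orbit inside $F_r$ has size $2$ and contributes one representative to $C_r$, giving $|F_r| = 2|C_r|$; the same reasoning gives $|F_{r^2 \setminus r}| = 4|C_{r^2 \setminus r}|$ and $|F_e| = 8|C_e|$. Substituting these into the partition (\ref{eq:Qn_decomposition_Fx}) yields the claimed identity $Q(n) = 2|C_r| + 4|C_{r^2 \setminus r}| + 8|C_e|$, completing the proof.
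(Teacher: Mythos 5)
Your proof is correct and follows essentially the same route as the paper: both apply the orbit--stabilizer theorem to the $D_4$-action, identify the stabilizers of configurations in $F_r$, $F_{r^2 \setminus r}$, and $F_e$ as $\langle r \rangle$, $\lbrace e, r^2 \rbrace$, and $\lbrace e \rbrace$ using Lemma~\ref{lem:no-s-symmetry}, and substitute the resulting orbit sizes into the partition (\ref{eq:Qn_decomposition_Fx}). The only difference is that you explicitly justify, via the conjugation rule and normality of the candidate stabilizers, that each $F_x$ is a union of complete $D_4$-orbits --- a step the paper asserts without proof --- so your write-up is, if anything, slightly more complete.
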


\begin{proof}
Fix $n \geq 2$. For an $n$-queens configuration $Q$, define its orbit and stabilizer (under the actions of $D_4$) by
\begin{align}
    \mathrm{Orb}(Q)  &= \{\, yQ : y \in D_4 \,\},\\
    \mathrm{Stab}(Q) &= \{\, y \in D_4 : yQ = Q \,\}.
\end{align}
First, for each $x \in \{r,\, r^2 \setminus r,\, e\}$ and each $Q \in C_x$, we have $\mathrm{Orb}(Q) \subseteq F_x$. 
Consequently,
\begin{align}
    F_x = \{\, yQ : Q \in C_x,\ y \in D_4 \,\}.
\end{align}
Therefore, using the decomposition of $F_x$ into $D_4$-orbits, we obtain
\begin{align}\label{eq:sizeof_Fx_1}
    |F_x| = \sum_{Q \in C_x} |\mathrm{Orb}(Q)|.
\end{align}

Next, by the definition of the sets $F_x$, the stabilizer size is constant on each $F_x$ (and hence on each $C_x$). More precisely:
\begin{align}
    Q \in F_r
    &\iff rQ = Q
    &&\Rightarrow\ \mathrm{Stab}(Q) = \{e, r, r^2, r^3\}
    &&\Rightarrow\ |\mathrm{Stab}(Q)| = 4, \\
    Q \in F_{r^2 \setminus r}
    &\iff r^2Q = Q \text{ but } rQ \neq Q
    &&\Rightarrow\ \mathrm{Stab}(Q) = \{e, r^2\}
    &&\Rightarrow\ |\mathrm{Stab}(Q)| = 2, \\
    Q \in F_e
    &\iff \text{$Q$ is fixed only by $e$}
    &&\Rightarrow\ \mathrm{Stab}(Q) = \{e\}
    &&\Rightarrow\ |\mathrm{Stab}(Q)| = 1.
\end{align}

Combining \eqref{eq:sizeof_Fx_1} with the orbit--stabilizer theorem yields, for each $x \in \{r,\, r^2 \setminus r,\, e\}$,
\begin{align}
    |F_x|
    = \sum_{Q \in C_x} |\mathrm{Orb}(Q)|
    = \sum_{Q \in C_x} \frac{|D_4|}{|\mathrm{Stab}(Q)|}
    = \frac{8}{|\mathrm{Stab}(Q)|}\,|C_x|,
\end{align}
where $|\mathrm{Stab}(Q)|$ is the constant stabilizer size for configurations in $C_x$.

Finally, using \eqref{eq:Qn_decomposition_Fx}, we obtain
\begin{align}
    Q(n)
    = |F_r| + |F_{r^2 \setminus r}| + |F_e|
    = 2|C_r| + 4|C_{r^2 \setminus r}| + 8|C_e|.
\end{align}
\end{proof}

\section{Proof of Theorem~\ref{thm:Qn_divby4_1}}\label{symmetry_of_Qn}
By Theorem~\ref{thm:Qn_decomposition}, it suffices to show that $|F_r| \equiv 0 \pmod{4}$ for all $n \ge 6$.

Consider any $n$-queens configuration $Q$ that is fixed by $r$. The action of $r$ partitions the queens of $Q$ into $r$-orbits.
Each orbit has size $4$, except possibly a single fixed point (the center square), which can occur only when $n$ is odd.
Hence $n=|Q|$ must be of the form $n=4k$ (even $n$) or $n=4k+1$ (odd $n$). In particular, if $n \equiv 2,3\pmod{4}$ then $F_r=\emptyset$.
Thus, by Theorem~\ref{thm:Qn_decomposition}, we must have the following partial result:

\begin{lemma}\label{lem:4k_4kp1_imply_Qn_div4}
    $Q(n) \equiv 0 \; (\textrm{mod} \; 4)$ when $n \equiv 2,3 \; (\textrm{mod} \; 4)$.
\end{lemma}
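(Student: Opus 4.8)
The plan is to read the result straight off the decomposition in Theorem~\ref{thm:Qn_decomposition}. Since
\[
Q(n) = 2|C_r| + 4|C_{r^2 \setminus r}| + 8|C_e|,
\]
the two terms $4|C_{r^2 \setminus r}|$ and $8|C_e|$ are automatically divisible by $4$, so it suffices to control the remaining contribution $2|C_r| = |F_r|$. I would in fact prove the stronger statement that $F_r = \emptyset$ — hence $|C_r| = 0$ — whenever $n \equiv 2,3 \pmod 4$, which makes the claimed divisibility immediate.

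To establish that $F_r$ is empty in these residue classes, I would analyze how the cyclic group $\langle r \rangle$ of order $4$ acts on the individual squares occupied by an $r$-fixed configuration. Fix $Q \in F_r$. Because $rQ = Q$, the group $\langle r \rangle$ acts on the $n$ queens of $Q$, partitioning them into orbits of size $1$, $2$, or $4$. A size-$1$ orbit is a square fixed by $r$, and a size-$2$ orbit is a square fixed by $r^2$ but not by $r$. Solving $r(i,j) = (i,j)$ and $r^2(i,j) = (i,j)$ directly from the definition of the action shows that both equations force $i = j = \tfrac{n+1}{2}$; that is, the \emph{only} square fixed by either $r$ or $r^2$ is the center square, which exists only when $n$ is odd and is in fact fixed by all of $\langle r \rangle$. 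Consequently there are no size-$2$ orbits at all, and there is at most one size-$1$ orbit (the center), present only for odd $n$.

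Counting queens then pins down $n$ modulo $4$. If $n$ is even there is no center square, so every orbit has size $4$ and $n = |Q|$ is a multiple of $4$, giving $n \equiv 0 \pmod 4$. If $n$ is odd the center holds at most one queen, and the remaining queens split into size-$4$ orbits, so $n \equiv 1 \pmod 4$. In either case $F_r \neq \emptyset$ forces $n \equiv 0$ or $1 \pmod 4$; contrapositively, $n \equiv 2,3 \pmod 4$ implies $F_r = \emptyset$. Substituting $|C_r| = 0$ into the decomposition yields $Q(n) = 4|C_{r^2 \setminus r}| + 8|C_e| \equiv 0 \pmod 4$, as required.

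I do not expect a serious obstacle, since the discussion preceding the lemma already isolates the relevant orbit structure; the one point deserving genuine care — rather than mere assertion — is the exclusion of size-$2$ orbits, i.e. verifying that no queen can be fixed by the $180^\circ$ rotation $r^2$ without also being fixed by $r$. This reduces to the short computation above, namely that $r$ and $r^2$ share the center as their unique fixed square.
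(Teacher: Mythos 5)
Your proposal is correct and follows essentially the same route as the paper: show that orbits of queens under $\langle r \rangle$ have size $4$ except possibly a single center fixed point, conclude $F_r = \emptyset$ when $n \equiv 2,3 \pmod 4$, and read off divisibility by $4$ from the decomposition $Q(n) = 2|C_r| + 4|C_{r^2 \setminus r}| + 8|C_e|$ of Theorem~\ref{thm:Qn_decomposition}. Your explicit verification that $r^2$ fixes only the center square (ruling out size-$2$ orbits) is a detail the paper asserts without computation, but the argument is the same.
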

Now suppose $n = 4k$ or $n = 4k + 1$ for some $k \in \mathbb{N}_{\geq 2}$. Define for all $\ell \in [n]$ the border configurations
\begin{align}
    U_\ell = \begin{cases}
        \lbrace (1, \ell), ( \ell, n), (n, n+1- \ell), (n+1- \ell, 1) \rbrace, &n \equiv 0 \pmod{4}, \\
        \lbrace (1, \ell), ( \ell, n), (n, n+1- \ell), (n+1-\ell, 1) \rbrace \cup \left\lbrace \left(\frac{n+1}{2}, \frac{n+1}{2}\right) \right\rbrace, &n \equiv 1 \pmod{4}.
    \end{cases}
\end{align}
Any element $Q \in F_r$ will have $U_\ell \subseteq Q$ for some $\ell \in [n]$, by definition of $F_r$.
Denote by $Q_r^{(\ell)}$ the set of $r$-symmetric $n$-queen completions of $U_\ell$, so that
\begin{align}
    |F_r| = \sum_{\ell = 1}^{n} |Q_r^{(\ell)}|, \label{eq:Fr_decomp}
\end{align}
and define the border configuration complements $D^{(\ell)} = \lbrace Q \setminus U_\ell | Q \in Q_r^{(\ell)} \rbrace$, which 
by construction has $|Q_r^{(\ell)}| = |D^{(\ell)}|$ for all $\ell \in [n]$.
\begin{lemma} \label{lem:Dl_even}
    $|D^{(\ell)}|$ is even for all $\ell \in [n]$, when $n \geq 6$.
\end{lemma}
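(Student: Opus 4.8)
The plan is to exhibit a fixed-point-free involution on $D^{(\ell)}$; its existence forces $|D^{(\ell)}|$ to be even. My candidate is the reflection $s$ itself, acting by $D \mapsto sD$. To make this work I need three things: that $s$ maps $D^{(\ell)}$ into itself, that it is an involution, and that it has no fixed point.

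For well-definedness, fix $D \in D^{(\ell)}$, so $D$ is $r$-symmetric and $U_\ell \cup D$ is a valid $n$-queens configuration; equivalently, $D$ is non-attacking and avoids every row, column, diagonal and anti-diagonal occupied by $U_\ell$. First, $sD$ is again $r$-symmetric: from the dihedral relation $rs = sr^{-1}$ together with $r^{-1}D = D$ we get $r(sD) = s(r^{-1}D) = sD$. Since $s$ is a board symmetry, $sD$ is non-attacking. The substance is that $sD$ still avoids the lines of $U_\ell$, for which I would check that $s$ permutes those lines among themselves. Reflection fixes every row and acts on columns by $C_j \mapsto C_{n+1-j}$, so the occupied column set $\{1,\ell,n+1-\ell,n\}$ is preserved; on diagonals it acts by $D^-_k \mapsto D^+_{2n-k}$ and $D^+_k \mapsto D^-_{2n-k}$. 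A direct substitution then shows that the four occupied anti-diagonals $D^-_{\ell-1+n}, D^-_{2n-\ell}, D^-_{\ell}, D^-_{n+1-\ell}$ and the four occupied diagonals $D^+_{n+1-\ell}, D^+_{\ell}, D^+_{2n-\ell}, D^+_{n+\ell-1}$ are interchanged in the four pairs of the diagonal pairing (for instance $s$ swaps $D^-_{\ell-1+n}$ with $D^+_{n+1-\ell}$), and for $n \equiv 1 \pmod{4}$ the central queen's two lines $D^-_n, D^+_n$ are likewise swapped. Hence $D$ avoids the lines of $U_\ell$ iff $sD$ does; since $sD$ then also meets none of the rows of $U_\ell$ it is disjoint from $U_\ell$, so $U_\ell \cup sD$ is valid and $sD \in D^{(\ell)}$.

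Being a reflection, $s$ satisfies $s^2 = e$, so it is an involution on $D^{(\ell)}$. For fixed-point-freeness, suppose $sD = D$. Then for each queen $(i,j) \in D$ the square $(i,n+1-j)$ also lies in $D$; if $j \neq n+1-j$ these are two distinct queens in row $i$, contradicting that $D$ is non-attacking, so every queen of $D$ must sit on the central column $j=(n+1)/2$, forcing $|D| \le 1$. But a completion has $|D| = n-4 = 4(k-1)$ when $n=4k$ and $|D| = n-5 = 4(k-1)$ when $n = 4k+1$, which is at least $4$ because $k \ge 2$; this is the contradiction. Therefore $s$ pairs the elements of $D^{(\ell)}$ into disjoint two-element orbits $\{D, sD\}$, and $|D^{(\ell)}|$ is even.

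The only delicate point is the diagonal bookkeeping in the second paragraph: confirming that $s$ closes up the eight (or ten) diagonals meeting $U_\ell$. This is exactly the content of the diagonal pairing, so I would lean on that computation to keep the verification transparent; every other step is routine, and the degenerate values of $\ell$ for which $U_\ell$ is itself invalid simply give $D^{(\ell)}=\emptyset$, which is even.
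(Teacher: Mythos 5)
Your proof is correct and takes essentially the same route as the paper: the reflection $s$, acting by $D \mapsto sD$, is shown to be a well-defined, fixed-point-free involution on $D^{(\ell)}$ via the same row/column/diagonal pairing computation, so $D^{(\ell)}$ decomposes into disjoint pairs $\{D, sD\}$. If anything you are slightly more explicit than the paper, since you verify directly that $sD$ remains $r$-symmetric (via $rs = sr^{-1}$) and you quantify the non-emptiness of $D$ (namely $|D| = 4(k-1) \geq 4$) when ruling out fixed points, two steps the paper leaves implicit (the latter only in a remark after the proof).
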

\begin{proof}
    Fix $\ell \in [n]$ and consider any queen $(i,j) \in [n] \times [n]$ which is mutually attacking with a queen in 
    $U_\ell$, then also its reflection under $s$, $(i, n+1-j)$, is mutually attacking with a queen in $U_\ell$. This 
    follows from $(i,j)$ being mutually attacking with a queen in $U_\ell$ along a row, column, diagonal or anti-diagonal 
    having
    \begin{align}
        &(i,j) \in R_{m} &&\iff i = m &&&\iff (i, n+1-j) \in R_{m}, \\
        &(i,j) \in C_{m} &&\iff j = m &&&\iff (i, n+1-j) \in C_{n+1-m}, \\
        &\forall m \in \lbrace \ell, n, n+1-\ell, 1 \rbrace,
    \end{align}
    and
    \begin{align}
        &(i,j) \in D_{m}^{-} \iff j-i+n = m \iff (n+1 -j) + i - 1 = 2n - m \iff (i, n+1-j) \in D_{2n - m}^{+}, \\
        &\forall m \in \lbrace \ell-1+n, 2n - \ell, 1-\ell+n, \ell \rbrace,
    \end{align}
    noting that $R_{\ell}, R_{n}, R_{n+1-\ell}, R_{1}$ are the rows of the non-center piece queens of $U_\ell$, while $C_{\ell}, C_{n}, C_{n+1-\ell}, C_{1}$ 
    are the non-centerpiece columns and 
    $D_{\ell-1+n}^{-}, D_{2n-\ell}^{-}, D_{1-\ell+n}^{-}, D_{\ell}^{-}, D_{n-\ell+1}^{+}, D_{\ell}^{+}, D_{n+\ell-1}^{+}, D_{2n-\ell}^{+}$ 
    are the non-centerpiece diagonals and anti-diagonals of $U_\ell$, see Figure~\ref{fig:ul-diag-antidiag-pairs}. 

    \begin{figure}[h!]
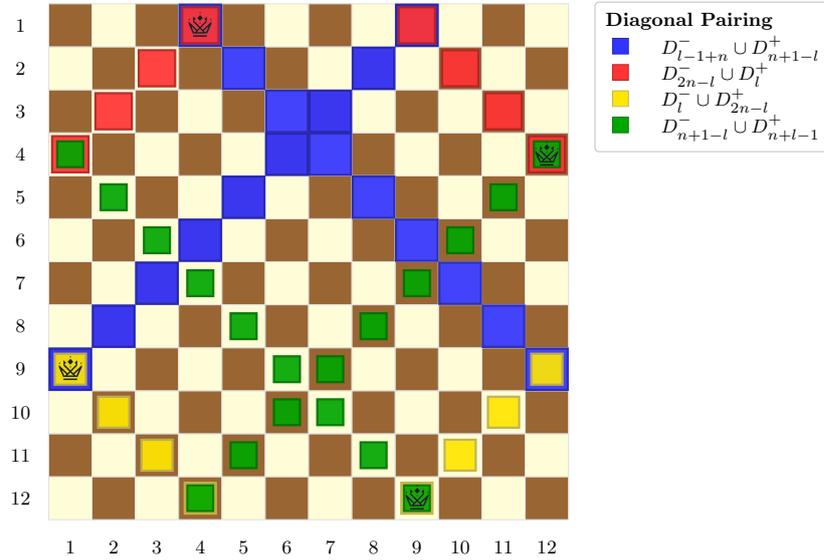

      \centering
      \scalebox{0.8}{\QueensUlDiagAntiPairsTwelve}
      \caption{The diagonals and anti-diagonals induced by the border queens of $U_\ell$ (shown here for $n=12$ and $\ell = 4$), 
      where each diagonal is paired with its corresponding anti-diagonal under the reflection $s$.}
      \label{fig:ul-diag-antidiag-pairs}
    \end{figure}

    For a possible center-piece in $U_\ell$, $(i,j)$ being mutually attacking with this center-piece clearly implies 
    $(i, n+1-j)$ mutually attacking with the center-piece.
    From this, we conclude $D \in D^{(\ell)} \iff sD \in D^{(\ell)}$. 
    Moreover, $sD \neq D$ for every $D \in D^{(\ell)}$. Indeed, if $sD = D$ then each queen $(i,j)\in D$ satisfies
    $(i,j)=(i,n+1-j)$, hence $j=(n+1)/2$. Thus every queen of $D$ lies in the center column.
    If $n$ is even this is impossible (no center column), and if $n$ is odd then $U_\ell$ already contains the center queen,
    so $D$ cannot place any queen in that column. In either case we get a contradiction.
    Therefore the involution $D \mapsto sD$ partitions $D^{(\ell)}$ into disjoint pairs $\{D,sD\}$, so $|D^{(\ell)}|$ is even.
\end{proof}
We have $|Q_r^{(\ell)}| = |Q_r^{(n+1-\ell)}|$ for each $\ell \in [n]$ as we have the involution given by 
$s : Q_r^{(\ell)} \to Q_r^{(n+1-\ell)}$. 
$Q_r^{(\frac{n+1}{2})} = \emptyset$ for $n \equiv 1 \pmod{4}$, as the queens in $U_\ell$ in this 
case are mutually attacking, therefore we can derive from (\ref{eq:Fr_decomp}) that
\begin{align} \label{eq:Fr_mod4_0}
    |F_r| = \sum_{\ell = 1}^{n} |Q_r^{(\ell)}| = 2\sum_{\ell=1}^{\lfloor \frac{n}{2} \rfloor} |Q_r^{(\ell)}|
    = 2\sum_{\ell=1}^{\lfloor \frac{n}{2} \rfloor} |D^{(\ell)}| \stackrel{\textrm{Lemma}~\ref{lem:Dl_even}}{\equiv} 0 \pmod{4}.
\end{align}
Thus completing the proof of Theorem~\ref{thm:Qn_divby4_1}.

Note that the requirement $k \geq 2$ in the proof above stems from the fact that for $n \in \lbrace 4,5 \rbrace$ we have
some $\ell \in [n]$ with $\emptyset \in D^{(\ell)}$ and hence contradicting $sD \neq D$ in the proof of Lemma~\ref{lem:Dl_even}.

\backmatter

\bmhead{Acknowledgments}
I am grateful to Jakob Lemvig and Christian Henriksen for their supervision, regular meetings,
feedback on methodology, and helpful comments.

\bibliography{sn-bibliography}

@book{garcia2023nqueens,
  title={The n-Queens Problem: An Activity Book},
  author={Garc\'{i}a S\'{a}nchez, Alicia},
  year={2023},
  publisher={University of Warwick, Mathematics Institute},
  address={Coventry, UK},
  series={Undergraduate Research Support Scheme},
  note={Supervised by Dr. Candida Bowtell}
}

@article{Pratt2019NQueens,
  author  = {Pratt, Kevin},
  title   = {Closed-Form Expressions for the n-Queens Problem and Related Problems},
  journal = {International Mathematics Research Notices},
  volume  = {2019},
  number  = {4},
  pages   = {1098--1107},
  year    = {2019},
  doi     = {10.1093/imrn/rnx119}
}

@article{Simkin2023NQueens,
  author  = {Simkin, Michael},
  title   = {The number of $n$-queens configurations},
  journal = {Advances in Mathematics},
  volume  = {427},
  pages   = {109127},
  year    = {2023},
  doi     = {10.1016/j.aim.2023.109127}
}

@article{Glock2022NQueens,
  title={The n-queens completion problem},
  author={Stefan Glock and David Munhá Correia and Benny Sudakov},
  journal={Research in the Mathematical Sciences},
  volume={9},
  number={41},
  year={2022},
  doi={https://doi.org/10.1007/s40687-022-00335-1}
}

@software{preusser_q27_2016,
  author = {Preußer, Thomas B.},
  title = {{q27}: 27-Queens Puzzle: Massively Parallel Enumeration and Solution Counting},
  year = {2016},
  url = {https://github.com/preusser/q27},
  note = {[computer software]},
  license = {AGPL-3.0}
}

@article{BellStevens2009,
  author       = {J. Bell and B. Stevens},
  title        = {A survey of known results and research areas for the \(n\)-queens problem},
  journal      = {Discrete Mathematics},
  volume       = {309},
  number       = {1},
  pages        = {1--31},
  year         = {2009},
  doi          = {10.1016/j.disc.2007.12.043}
}

@misc{oeisA000170,
  author       = {Sloane, N. J. A.},
  title        = {The On-Line Encyclopedia of Integer Sequences, 
                  {A000170}: Number of ways of placing $n$ nonattacking
                  queens on an $n \times n$ board},
  howpublished = {\url{https://oeis.org/A000170}},
  note         = {Accessed 6 Dec 2025.},
  year         = {2025}
}

@book{kraitchik1953mathematical,
  author    = {Kraitchik, Maurice},
  title     = {Mathematical Recreations},
  edition   = {2},
  publisher = {Dover},
  address   = {New York},
  year      = {1953}
}

@article{Pauls1874Damenproblem,
  author  = {Pauls, Emil},
  title   = {Das Maximalproblem der Damen auf dem Schachbrete},
  journal = {Deutsche Schachzeitung},
  volume  = {29},
  year    = {1874},
  pages   = {129--134, 257--267}
}
\end{document}